\definecolor{webwheel}{rgb}{0,.5,0}
\definecolor{webbrown}{rgb}{.6,0,0}
\begin{document}
\theoremstyle{plain}
\newtheorem{theorem}{Theorem}
\newtheorem{corollary}[theorem]{Corollary}
\newtheorem{lemma}[theorem]{Lemma}
\newtheorem{proposition}[theorem]{Proposition}

\theoremstyle{definition}
\newtheorem{definition}[theorem]{Definition}
\newtheorem{example}[theorem]{Example}
\newtheorem{conjecture}[theorem]{Conjecture}

\theoremstyle{remark}
\newtheorem{remark}[theorem]{Remark}

\baselineskip17pt

\newcommand{\A}{\mathbb{N}_0}
\newcommand{\ber}{\mathcal{B}}
\newcommand{\eulr}{\mathcal{E}}
\newcommand{\s}{\mathcal{S}}
%\numberwithin{equation}{section}
\title{\LARGE\bf On an Arithmetic Convolution}
\date{}
\author{Jitender Singh\\Department of Mathematics\\ Guru Nanak Dev
University\\ Amritsar-143005, Punjab, INDIA\\
\tt sonumaths@gmail.com\\ https://sites.google.com/site/sonumaths2/}
\maketitle
\begin{abstract}
The Cauchy-type product of two arithmetic functions $f$ and $g$ on nonnegative integers is defined as
$(f\bullet g)(k):=\sum_{m=0}^{k} {k\choose m}f(m)g(k-m)$.
We explore some algebraic properties of the aforementioned convolution, which is a fundamental-characteristic of the identities involving the Bernoulli numbers, the Bernoulli polynomials, the power sums, the sums of products,  etc.
\end{abstract}
\section{Introduction}
An arithmetic function  is a map $f:\A\rightarrow \mathbb{C}$. Let $\psi:\A\times \A \rightarrow \A$ be a binary operation
satisfying the following properties:

(1) the binary operation $\psi$ is associative and commutative

(2) for each $n\in \A,$ the set $\{(x,y)\in\A\times \A~|~\psi(x,y)=n,~n\geq 0\}$ is finite

(3) either $\psi(x,0)=n$ implies $x=n$ or $\psi(x,1)=n$ implies $x=n$.

For a given $\psi,$ as defined above, a $\psi-$convolution on the set $\s$ of all arithmetic functions is a binary operation $*_\psi:\s\times \s\rightarrow \s,$ such that $(f*_\psi g)(k)=\sum_{\psi(k_1,k_2)=k}f(k_1)g(k_2)$ for all $f, g\in\s$.

The Cauchy product with $\psi(k_1,k_2)=k_1+k_2;$ the Dirichlet product with $\psi(k_1,k_2):=k_1k_2$ on $\mathbb{N};$ the natural product with
$$\psi(k_1,k_2)=\begin{cases}
                         k_1,& \text{if}~ k_1=k_2;\\
                          0, &\text{otherwise}.\end{cases}$$
and the lcm product with $\psi(k_1,k_2)=\text{lcm}(k_1,k_2)$
are some known $\psi-$convolutions. For the other convolutions,
the reader may refer to Lehmer \cite{lehmer}, Subbarao \cite{subbarao}, McCarthy \cite{McCarthy}, and references cited therein.

 The purpose of the present study is to explore some algebraic properties of the weighted Cauchy product on  $\mathcal{S}$ defined as follows:
 \begin{equation}\label{eq1}
(f\bullet g)(k):=\sum_{\psi(k_1,k_2)=k}{k_1+k_2\choose k_1}f(k_1)g(k_2),
 \end{equation}
where $\psi(k_1,k_2)=k_1+k_2$.  With the usual addition, $(f+g)(k)=f(k)+g(k),$ for all $k\in\A$ and $f,g\in\mathcal{S},$ the triple $(\mathcal{S},+,\bullet)$ is a commutative ring with one. The convolution $\bullet,$ also known as binomial convolution, was used by Haukkanen \cite{Hau} to discuss the roots of arithmetic functions. The convolution, however, has largely remained unnoticed. The term ``binomial convolution'' is also used to denote another convolution in the modern literature \cite{toth1}, so,  in the sequel,
we will call the aforementioned convolution $\bullet$, \emph{the Cauchy-type product}.
Recently, Gould and Quaintance  \cite{gould} have used the term binomial-transform  for the Cauchy-type product.

Consider the set $\mathcal{A}:=\{f\in\s~|~f(0)\neq 0\}$ with the Cauchy-type product as binary operation. Then the set $\mathcal{A}$ with the identity $e$ defined by
$$e(k):=\begin{cases}
                             1, \text{if}~ k=0;\\
                             0, \text{otherwise}.\end{cases}$$
and, the inverse $f^{-1}=g$ defined inductively by
\begin{equation}\label{eq2}
g(0)=\frac{1}{f(0)};~g(k)=-\frac{1}{f(0)}\sum_{m=1}^{k-1}{k\choose m}f(m)g(k-m),
\end{equation}
is an abelian group as $\mathcal{A}$ does under the well-known Cauchy product.
It also follows from  \eqref{eq2} that, $f\in\s$ is invertible if and only if $f(0)\neq 0$. Therefore,
$\mathcal{A}$ serves as the group of units in the ring $\mathcal{S}$.
\begin{definition}
Let $I,\nu\in\mathcal{A}$ be the arithmetic functions such that $I(k)=1$ and  $\nu(k)=(-1)^k,$ for all $k\in \A$.
\end{definition}
Note that $I\bullet \nu=e$, which gives $I^{-1}=\nu$.
The following easy result shows that, in the Cauchy-type product, the arithmetic function
$\nu$ plays a role analogous to that of the M\"obius function.
\begin{proposition}
Let $f\in\mathcal{A},$ and $F(k):=\sum_{m=0}^k {k\choose m}f(m),$
$k\in\A$. Then $F\in\mathcal{A},$ and $f(k)=\sum_{m=0}^k {k\choose m}F(m)(-1)^{k-m}$.
\end{proposition}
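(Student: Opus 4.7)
The plan is to recognize the transform $F$ as the Cauchy-type product $I\bullet f$ and then apply the already-established identity $I^{-1}=\nu$ to invert it. All the combinatorial content of the proposition is built into the group structure described in the introduction, so no new calculation is really needed.

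First, I would observe that
\[
(I\bullet f)(k)=\sum_{m=0}^{k}\binom{k}{m}I(m)f(k-m)=\sum_{m=0}^{k}\binom{k}{m}f(k-m)=\sum_{m=0}^{k}\binom{k}{m}f(m),
\]
the last equality coming from the symmetry $\binom{k}{m}=\binom{k}{k-m}$ and a change of index. Hence $F=I\bullet f$. Evaluating at $k=0$ gives $F(0)=f(0)\neq 0$, which shows $F\in\mathcal{A}$ and takes care of the first assertion.

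For the inversion, since $\mathcal{A}$ is a group under $\bullet$ and the excerpt has already verified $I\bullet\nu=e$, I can multiply the relation $F=I\bullet f$ on the left by $\nu$ to get $f=\nu\bullet F$. Unwinding the definition yields
\[
f(k)=\sum_{m=0}^{k}\binom{k}{m}\nu(m)F(k-m)=\sum_{m=0}^{k}\binom{k}{m}(-1)^{m}F(k-m),
\]
and then reindexing $m\mapsto k-m$ rewrites this as $\sum_{m=0}^{k}\binom{k}{m}(-1)^{k-m}F(m)$, which is exactly the claimed identity.

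There is really no main obstacle here; the only thing to be slightly careful about is the symmetry step that lets one identify $F$ with $I\bullet f$ (as opposed to $f\bullet I$, though of course $\bullet$ is commutative, so even this is redundant). The proposition is essentially a restatement of the fact that $\nu$ is the $\bullet$-inverse of $I$, packaged as a binomial inversion formula.
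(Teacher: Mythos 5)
Your proof is correct and follows exactly the paper's route: identifying $F=I\bullet f$ and inverting via $I\bullet\nu=e$; you have merely written out the details (the symmetry of the binomial coefficient, the check that $F(0)=f(0)\neq 0$, and the reindexing) that the paper leaves implicit.
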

\begin{proof}
    It follows from the fact that $F=f\bullet I$ and $I\bullet \nu=e$.
\end{proof}
Rest of the paper is organized as follows.
The identities concerning the Bernoulli numbers,
the Bernoulli polynomials, and the power sums are discussed via the Cauchy-type product in Section \ref{sec:2}.
In the same Section, some interesting properties of the arithmetic functions, which obey symmetric identities are also derived.
It will become apparent that the identities are a consequence of the Cauchy-type product. Algebraic properties of the Cauchy-type product
are obtained in Section \ref{sec:3}.
In Section \ref{sec:4}, some identities involving both
the Dirichlet product and the Cauchy-type product are discussed.
\section{Motivation}\label{sec:2}
The Cauchy-type product is inherent in many identities involving binomial coefficients.
For example, it appears in the following fundamental identity of the Bernoulli numbers
\begin{equation}\label{eq3}
   \sum_{m=0}^{k}{k\choose m}\frac{B_m}{k+1-m}=e(k)~\text{for all}~k\in\A,
\end{equation}
where the $k$th Bernoulli number is denoted by $B_k$.
The left side of \eqref{eq3} is the Cauchy-type product
$\ber\bullet \xi_1,$ where $\ber(k):=B_k$ and $\xi_1(k):=\frac{1}{k+1}$. Thus \eqref{eq3} is equivalent to $\ber=\xi_1^{-1}$.

A relatively simple identification is the binomial theorem,
$\sum_{m=0}^{k}{k\choose m}x^m=(\epsilon_x\bullet I)(k)$,
where  $\epsilon_0:=e$ and $\epsilon_1=I,$ so that $\epsilon_x(0)=1$ and $\epsilon_x(k):=x^k$ for $k>0$.
It is easy to verify that  $\epsilon_x\in \mathcal{A},$ for each $x\in\mathbb{C},$ where
\begin{equation}
    \epsilon_x^{-1}(k)=\begin{cases}
    1,~\hfill \text{if}~ k=0; \\
    (-x)^k,~\text{if}~ k>0.
    \end{cases}
\end{equation}
Further, the $k$th Bernoulli polynomial $\ber_x(k)$ is defined by the generating function $\displaystyle \frac{te^{xt}}{e^t-1}=\sum_{k=0}^{\infty}\ber_x(k)\frac{t^k}{k!},$ where $x\in\mathbb{C}$ and $k\in\A$.
We then have $\ber(k)=\ber_0(k)$. Among many other properties of $\ber_x$ and $\ber$, we are interested in  the one, which is given by
$\ber_x(k)=\sum_{m=0}^{k}{k\choose m}x^m\ber(k-m)$
or $\ber_x=\ber\bullet \epsilon_x$. This identification
leads to the identities,
$\ber_{x+y}=\ber\bullet \epsilon_{x+y}=(\ber\bullet \epsilon_{x})\bullet \epsilon_{y}=\ber_x\bullet \epsilon_y;~\ber_{1-x}=\nu \ber_x$.
Also, by inversion, $\ber=\ber_x\bullet \epsilon_x^{-1},$  which gives
$$\ber(0)=\ber_x(0);~\ber(k)=\sum_{m=0}^{k}{k\choose m}\ber_x(k-m)(-x)^m,~k\in\mathbb{N}.$$
If we take $x=0$ and $x=1,$  in  $\ber_{1-x}=\nu \ber_x$ and $\ber=\ber_x\bullet \epsilon_{-x}$ respectively, we find that $I\bullet \ber=\nu \ber$.

The inverse of $\ber_x$ with respect to the Cauchy-type product is given by
\begin{equation}\label{eq5}
    \ber_x^{-1}=\xi_1\bullet \epsilon_{-x}=\xi_{1}(\xi_{-x+1,1}-\xi_{-x,1}),
\end{equation}
where $$\xi_{x,m}(k):=\begin{cases}
                 \frac{x^{k+m}}{k+m},~\text{if}~x\neq 0; \\
                 e(k),~~\text{otherwise}.
               \end{cases}$$
and $\xi_{0,m}:=e$ for each $m\in\mathbb{C}^\times$.
By applying the transformation $x\rightarrow -x$ in \eqref{eq5}, we get
\begin{equation}\label{eq6}
    \ber_{-x}^{-1}(k)=\xi_{1}(k)(\xi_{x+1,1}(k)-\xi_{x,1}(k))=\frac{(1+x)^{k+1}-x^{k+1}}{k+1}.
\end{equation}
It can be easily verified that $\displaystyle \xi_1^2(k)=2\frac{2^{k+1}-1}{(k+1)(k+2)}$. Therefore, now \eqref{eq5} implies
\begin{equation}\label{eq7}
  \ber_x^{-2}(k)=-2\frac{(1-2x)^{k+2}-2^{k+1}(1-x)^{k+2}-2^{k+1}(-x)^{k+2}}{(k+1)(k+2)},
\end{equation}
and the process  inductively leads to the following.
\begin{theorem}\label{th0}For any $n\in\mathbb{N}$
\begin{equation}\label{eq8}
\ber_x^{-n}(k)=\frac{k!}{(k+n)!}\sum_{j=0}^{n}\left(\begin{array}{c}
                                                               n \\
                                                               j
                                                             \end{array}\right)
(-1)^{n-j}(j-nx)^{k+n}.
\end{equation}
In addition,  $\ber_{1-x}^{-n}(k)=(-1)^{k}\ber_{x}^{-n}(k)$ and $\frac{d}{dx}\ber_x^{-n}(k)=-n k\ber_x^{-n}(k-1)$.
\end{theorem}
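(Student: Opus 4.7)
The plan is to work with exponential generating functions (EGFs). For $f\in\mathcal{S}$, set $\widehat{f}(t):=\sum_{k\geq 0}f(k)\,t^k/k!$; a direct computation from the definition of the Cauchy-type product confirms that it corresponds to ordinary multiplication of EGFs, $\widehat{f\bullet g}=\widehat{f}\,\widehat{g}$, so inversion corresponds to reciprocation. Combined with the identity $\widehat{\ber_x}(t)=t\,e^{xt}/(e^t-1)$ already recorded in the text, this yields
\begin{equation*}
\widehat{\ber_x^{-n}}(t)\;=\;\left(\frac{e^t-1}{t\,e^{xt}}\right)^{\!n}\;=\;\frac{(e^t-1)^n}{t^n}\,e^{-nxt}.
\end{equation*}

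I will then expand $(e^t-1)^n$ by the binomial theorem and fold in the factor $e^{-nxt}$ to rewrite this as
\begin{equation*}
\widehat{\ber_x^{-n}}(t)\;=\;\frac{1}{t^n}\sum_{j=0}^{n}\binom{n}{j}(-1)^{n-j}e^{(j-nx)t}.
\end{equation*}
The apparent pole of order $n$ at $t=0$ is removable because the coefficient of $t^m/m!$ in the sum is $\sum_{j=0}^{n}\binom{n}{j}(-1)^{n-j}(j-nx)^m$, which vanishes for every $m<n$ by the standard finite-difference identity that the $n$-th forward difference annihilates any polynomial in $j$ of degree less than $n$. After cancelling $t^n$ and substituting $m=k+n$, the coefficient of $t^k/k!$ is precisely the right-hand side of \eqref{eq8}.

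The two remaining claims fall out of the closed form by direct manipulation. For the symmetry, I substitute $x\mapsto 1-x$ in \eqref{eq8} and reindex $j\mapsto n-j$; using $(-j+nx)^{k+n}=(-1)^{k+n}(j-nx)^{k+n}$ and $(-1)^{j}=(-1)^n(-1)^{n-j}$, two factors of $(-1)^n$ cancel and the prefactor $(-1)^k$ emerges. For the derivative, $\tfrac{d}{dx}(j-nx)^{k+n}=-n(k+n)(j-nx)^{k+n-1}$, and the factor $(k+n)$ merges with $k!/(k+n)!$ to yield $k\cdot(k-1)!/((k-1)+n)!$; the surviving sum is exactly the one defining $\ber_x^{-n}(k-1)$, so the whole expression equals $-nk\,\ber_x^{-n}(k-1)$.

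The main technical point is the finite-difference cancellation that renders the factor $1/t^n$ harmless; once this is in hand everything else is routine bookkeeping. An alternative closer to the spirit of the preceding discussion would be induction on $n$ via $\ber_x^{-(n+1)}=\ber_x^{-1}\bullet\ber_x^{-n}$ together with the explicit expression \eqref{eq6}, but the resulting double sum is considerably less transparent than the generating-function calculation proposed here.
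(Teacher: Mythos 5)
Your proof is correct, and it takes a genuinely different route from the paper's. The paper does not actually write out a proof of Theorem~\ref{th0}: it computes $\ber_x^{-1}=\xi_1\bullet\epsilon_{-x}$ and $\ber_x^{-2}$ explicitly via the Cauchy-type product and then asserts that ``the process inductively leads to'' \eqref{eq8}, so the intended argument is the induction on $n$ that you mention (and rightly reject) as the less transparent alternative at the end of your write-up. Your exponential-generating-function argument is cleaner and, unlike the paper's sketch, actually complete: the one nontrivial point --- that $1/t^n$ is a removable singularity because the $n$-th finite difference $\sum_{j=0}^{n}\binom{n}{j}(-1)^{n-j}(j-nx)^m$ annihilates polynomials of degree $m<n$ --- is exactly the step an inductive proof would have to smuggle in somewhere, and you isolate it explicitly. (The paper itself concedes in Section~\ref{sec:2} that the Cauchy-type product ``is essentially the same as the ordinary product on their respective generating functions,'' so your method is entirely in its spirit.) Your derivations of the symmetry $\ber_{1-x}^{-n}(k)=(-1)^k\ber_x^{-n}(k)$ via the reindexing $j\mapsto n-j$ and of the derivative identity by differentiating under the sum are both correct and are, as far as one can tell, what the paper leaves entirely to the reader. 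What your approach buys is a self-contained, verifiable proof of the general $n$ case; what the paper's incremental convolution computation buys is the intermediate identities \eqref{eq6} and \eqref{eq7}, which it reuses later (e.g., in the Faulhaber formula \eqref{eq10}).
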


By Theorem \ref{th0}, one obtains a generalization of \eqref{eq3} as
\begin{equation}\label{eq9}
    n!\displaystyle \sum_{m=0}^{k}\sum_{j=0}^{n}\frac{\ber_x^{n}(k-m)}{(k-m)!}\frac{(-1)^{n-j}}{(n-j)!}\frac{(j-nx)^{m+n}}{(m+n)!}\frac{1}{j!}=e(k)~,n\in\mathbb{N}
\end{equation}
which is precisely  \eqref{eq3}  for $n=1$.

It is interesting to note the following two identities obtainable from \eqref{eq9}, for $k=0,$ $n\in\mathbb{N}$ and $x\in\mathbb{C}$
\begin{equation}
\sum_{j=0}^{n}\frac{(-1)^{n-j}}{(n-j)!}\frac{(j-nx)^n}{j!}=1,\end{equation}
\begin{equation}
\sum_{j=0}^{n}\frac{(-1)^{n-j}}{(n-j)!}\frac{(j-nx)^{n-\alpha}}{j!}=0,~\text{for all}~n\geq \alpha,~x\neq \frac{1}{i}, ~i=1,\ldots,\alpha,~\alpha\in\mathbb{N}.
\end{equation}
\subsection{Power sum identities}
The ring structure on $\s$  is helpful in computing many other identities. For example,  from \eqref{eq6} one has
$$\left(\xi_1\bullet\sum_{x=1}^{n}\epsilon_x\right)(k)=
\sum_{x=1}^{n}(\xi_1\bullet\epsilon_x)(k)=\sum_{x=1}^{n}\ber_{-x}^{-1}(k)=\frac{(n+1)^{k+1}-1}{k+1},$$ which allows the
classical Faulhaber formula for the power sum in the form below (see Singh \cite{singh2009})
\begin{equation}\label{eq10}
\sum_{x=1}^{n}\epsilon_x(k)=\sum_{x=1}^n(\xi_1^{-1}\bullet\ber_{-x}^{-1})(k)=\sum_{m=0}^{k}
{k\choose m}\ber(m)\frac{(n+1)^{k+1-m}-1}{k+1-m}.
\end{equation}

Now, if $\mathcal{S}_n(k)$ denotes the power sum $\sum_{x=1}^{n}\epsilon_x(k),$  then \eqref{eq10} can be used to
represent the Faulhaber power sum by  $\mathcal{S}_x:=\ber\bullet \xi_{x+1,1}-e$.
We also have $\ber_{x+1}-\ber_{x}=\ber\bullet \epsilon_{x+1}-\ber\bullet \epsilon_x=\ber\bullet (\epsilon_{x+1}-\epsilon_x)$ using which the power sum can be expressed as
\begin{equation}
    \mathcal{S}_x(k)=(\ber\bullet (\xi_{x+1,1}-\xi_{1,1}))(k)=\xi_1(k)(\ber\bullet(\epsilon_{x+1}-\epsilon_1))(k+1)=\xi_1(k)(\ber_{x+1}-\ber_1)(k+1).
\end{equation}
\subsection{Symmetric identities}
The Bernoulli numbers are also known to satisfy many symmetric identities.
For given  $m,n\in \A$, $\ber$ satisfies the following identity attributed to Carlitz \cite{carlitz1971}
\begin{equation}\label{eq11}
(-1)^n\sum_{i=0}^n{n\choose i}\ber(m+i)=(-1)^m\sum_{i=0}^{m}{m\choose i}\ber(n+i).
\end{equation}
More generally, if  $F:=I\bullet f,~f\in\mathcal{A},$ then (see Gould and Quaintance \cite{gould})
\begin{equation}\label{eq12}
\sum_{i=0}^n{n\choose i}f(m+i)=\sum_{i=0}^{m}{m\choose i}(-1)^{m-i} F(n+i)~\text{for all}~m,n\in\A.
\end{equation}
Letting $f=\ber$ in \eqref{eq12} recovers  \eqref{eq11}.
The identity \eqref{eq12} motivates the following.
\begin{definition}
     For each $m\in \A,$ let $\psi_m:\A\rightarrow \A$. Define $\psi_m-$product of $f,g\in\s$ to be the map $\otimes_{\psi_m}: \s\times \s \rightarrow \s,$ such that $(f\otimes_{\psi_m} g)(n):=((f\circ\psi_m)\bullet (g\circ \psi_m))(n),$ where $\circ$ denotes the composition of maps.
\end{definition}
It follows from the definition that the binary operation $\otimes_{\psi_m}$ is commutative and associative, for each fixed $m\in\A$. Also, if $\psi_m$ is the identity map of $\A,$ then $\otimes_{\psi_m}$ coincides with the Cauchy-type product.

Note that the identity \eqref{eq12} can be rewritten as
\begin{equation}\label{eq13}
    (I\otimes_{\psi_m} f)(n)=(-1)^n(\nu\otimes_{\psi_n} F)(m),
\end{equation}
where  $\psi_m(n):=n+m$.

 The next result tells us that the Bernoulli numbers are not the only ones, which satisfy the symmetric identity \eqref{eq11}.
 \begin{theorem}
    For nonnegative integers $n$ and $m,$ and $f\in\mathcal{A},$
    \begin{equation}\label{eq14}
         (-1)^n(I\otimes_{\psi_m} f)(n)=(-1)^m(I\otimes_{\psi_n} f)(m) ~\Leftrightarrow~ I\bullet f=\nu f,
    \end{equation}
    where $\psi_m(n)=m+n$. Furthermore, any $f\in\mathcal{A}$ satisfying \eqref{eq14} is determined by its image $f(2\A),$ i.e.,
   \begin{equation}\label{eq15}
 f(2k+1)=-\sum_{i=0}^{k}{2k+1\choose 2i+1}\eulr_{1}(2i+1)f(2(k-i)),
   \end{equation}
 where $\eulr_{1}(2i+1)$ denotes the $(2i+1)$th Euler polynomial evaluated at $1$.
 \end{theorem}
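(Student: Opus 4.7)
The plan is to first unpack $(I\otimes_{\psi_m} f)(n) = \sum_{j=0}^{n}\binom{n}{j} f(m+j)$, which may be rewritten as $(I\bullet f_m)(n)$ with $f_m(k):=f(m+k)$. Two special cases deserve note: $(I\otimes_{\psi_0} f)(m) = (I\bullet f)(m)$ since $\psi_0$ is the identity on $\A$, and $(I\otimes_{\psi_m} f)(0) = f(m)$. The implication $(\Rightarrow)$ in \eqref{eq14} then drops out immediately by specializing \eqref{eq14} to $n=0$: this yields $f(m)=(-1)^m(I\bullet f)(m)$ for every $m\in\A$, which is exactly $I\bullet f=\nu f$.

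For $(\Leftarrow)$, assume $F:=I\bullet f=\nu f$, so $F(j)=(-1)^j f(j)$ for every $j\in\A$. Substituting this into the Gould--Quaintance identity \eqref{eq12} collapses the right-hand side to $(-1)^{m+n}\sum_{i=0}^m\binom{m}{i}f(n+i)$, because $(-1)^{m-i}(-1)^{n+i}=(-1)^{m+n}$ is independent of $i$; multiplying through by $(-1)^n$ produces precisely \eqref{eq14}.

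To deduce \eqref{eq15} I would pass to exponential generating functions. Set $\hat f(t):=\sum_{k\geq 0} f(k)\, t^k/k!$. The EGF of $I\bullet f$ is $e^t\hat f(t)$ and that of $\nu f$ is $\hat f(-t)$, so $I\bullet f=\nu f$ is equivalent to the functional equation $e^t\hat f(t)=\hat f(-t)$. Splitting $\hat f=\hat f_e+\hat f_o$ into its even and odd parts and rearranging gives the key identity
\[
\hat f_o(t)=-\hat f_e(t)\cdot\frac{e^t-1}{e^t+1}.
\]

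The step that needs care is recognizing $(e^t-1)/(e^t+1)$ as the EGF of $\eulr_1$ supported on odd indices. Starting from the standard generating function $\sum_{n\geq 0}E_n(x)t^n/n!=2e^{xt}/(e^t+1)$, a short computation gives
\[
\frac{e^t-1}{e^t+1}=\tfrac12\sum_{n\geq 0}\bigl(E_n(1)-E_n(0)\bigr)\frac{t^n}{n!};
\]
combining the relation $E_n(1)+E_n(0)=0$ for $n\geq 1$ with the symmetry $E_n(1-x)=(-1)^n E_n(x)$ forces $\eulr_1(n)=0$ for even $n\geq 2$, so $(e^t-1)/(e^t+1)=\sum_{k\geq 0}\eulr_1(2k+1)t^{2k+1}/(2k+1)!$. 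Extracting the coefficient of $t^{2k+1}/(2k+1)!$ in $\hat f_o=-\hat f_e\cdot(e^t-1)/(e^t+1)$ via binomial convolution of EGFs then produces exactly \eqref{eq15}. This Euler-polynomial identification is the only nonroutine ingredient; the rest is direct bookkeeping.
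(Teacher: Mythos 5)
Your proof is correct and follows essentially the same route as the paper: the two directions of the equivalence are handled exactly as there (specializing one index to $0$ for one implication, and substituting $F=\nu f$ into the Gould--Quaintance identity \eqref{eq12} for the other), and your derivation of \eqref{eq15} is the paper's argument transported through the standard EGF dictionary. Concretely, the paper multiplies $\nu\bullet\eulr_1+\eulr_1=2e$ by $I\bullet f$ to obtain $\eulr_1\bullet\tfrac{1}{2}(\nu f+f)=\nu f$, which is precisely your identity $\hat f_o=-\hat f_e\,(e^t-1)/(e^t+1)$ in generating-function form; the only cosmetic difference is that you additionally verify $E_{2j}(1)=0$ for $j\ge 1$, a correct step that the paper's version avoids because there the even part of $f$, rather than $\eulr_1$, kills the unwanted terms.
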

\begin{proof}
    We take $I\bullet f=\nu f$ in \eqref{eq12} to obtain the symmetric identity in \eqref{eq14}.

   To prove the converse part, we put $m=0$ in \eqref{eq14}.

For the remaining part,
note that the Euler polynomials are defined via the generating function
$\frac{e^{2xt}}{e^t+1}=\sum_{k=0}^{\infty}E_k(x)\frac{x^k}{k!},$ where
the $k$th Euler polynomial is denoted by $E_k(x)$. Let $\eulr_x\in\mathcal{A}$
be such that $\eulr_x(k):=E_k(x)$. Then $\eulr_1$ satisfies
\begin{equation}\label{eq16}
        \nu\bullet \eulr_1+\eulr_1= 2e,
    \end{equation}
    since $\frac{2e^{t}}{e^t+1}+e^{-t}\frac{2e^{t}}{e^t+1}=2$.

Now let $f\in\mathcal{A}$ satisfies \eqref{eq14}. Multiplying \eqref{eq16} by $I\bullet f$ and using $(I\bullet f)=\nu f,$ we get
\begin{equation}\label{eq17}
    \begin{split}
     \eulr_1\bullet \frac{(\nu f +f)}{2}= \nu f,
    \end{split}
\end{equation}
from which  \eqref{eq15} follows.
\end{proof}
\begin{remark}Any $f\in\mathcal{A}$ satisfying \eqref{eq14} also satisfies
\begin{equation}\label{eq18}
 f(2k)=-\frac{2}{2k+1}\sum_{i=0}^{k}{2k+1\choose 2i+1}\ber(2(k-i))f(2i+1),~k>0,
   \end{equation}
   but \eqref{eq18} is not sufficient to determine $f$.
\end{remark}
If some $f\in\mathcal{A}$ does not satisfy the symmetric identity \eqref{eq14},
we may define the deviation $\Delta_f\in\s$ of $f\in\mathcal{A}$ by
\begin{equation}
    \Delta_f:=I\bullet f-\nu f.
\end{equation}
Using this in \eqref{eq13}, we find that
\begin{equation}\label{eq19}
    (-1)^n(I\otimes_{\psi_m} f)(n)-(-1)^m(I\otimes_{\psi_n} f)(m)=(\nu\oplus_{\psi_n}\Delta_f)(m).
\end{equation}
\begin{example}
   The arithmetic function $\eulr_1$ satisfies $\nu\bullet \eulr_1=\nu\eulr_1$ which, together with \eqref{eq16}, gives
$\Delta_{\eulr_1}=2(I-e)$. Therefore, $(\nu\oplus_{\psi_n}\Delta_{\eulr_1})(m)=-2\nu(m)e(n)$.  Now from \eqref{eq19},
one has the following identity
\begin{equation}\label{eq20}
    (-1)^n(I\otimes_{\psi_m} \eulr_1)(n)-(-1)^m(I\otimes_{\psi_n} \eulr_1)(m)=2(\nu(n)e(m)-\nu(m)e(n)),
\end{equation}
 which, in particular, gives the symmetric identity
$$(-1)^n(I\otimes_{\psi_m} \eulr_1)(n)=(-1)^m(I\otimes_{\psi_n} \eulr_1)(m),~\text{for all}~m,n\in\mathbb{N}.$$
\end{example}
\begin{theorem}
    For $a,b,c\in\mathbb{C},$
    \begin{equation}\label{eq21}
     \epsilon_a\ber_c\bullet \epsilon_b \ber_{a+c}=\epsilon_b\ber_c\bullet \epsilon_a \ber_{b+c}.
    \end{equation}
     Further, if $\sigma_x\in\s$ is such that $(n+1)\sigma_{x}(n):=(\ber_{x+1}-\ber)(n+1)$ for $n\in\A,$ then
    \begin{equation}\label{eq22}
  b(\epsilon_{a}\ber_c\bullet \epsilon_b\sigma_{a+c-1})=a(\epsilon_{b}\ber_c\bullet \epsilon_a\sigma_{b+c-1}).
   \end{equation}
\end{theorem}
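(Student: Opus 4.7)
The approach I would take is exponential generating functions. For $f\in\s$, write $\hat f(t):=\sum_{k\ge 0} f(k)\,t^k/k!$. Three standard facts are in play: the Cauchy-type product converts to ordinary multiplication, $\widehat{f\bullet g}(t)=\hat f(t)\hat g(t)$; pointwise multiplication by $\epsilon_a$ acts as a dilation, $\widehat{\epsilon_a f}(t)=\hat f(at)$; and the Bernoulli polynomials have $\widehat{\ber_c}(t)=te^{ct}/(e^t-1)$. Combining these gives $\widehat{\epsilon_a\ber_c}(t)=at\,e^{act}/(e^{at}-1)$.

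For \eqref{eq21} I would multiply the EGFs of the two factors on the left-hand side:
\[
\widehat{\epsilon_a\ber_c\bullet\epsilon_b\ber_{a+c}}(t)
=\frac{at\,e^{act}}{e^{at}-1}\cdot\frac{bt\,e^{(a+c)bt}}{e^{bt}-1}
=\frac{ab\,t^2\,e^{(ab+ac+bc)t}}{(e^{at}-1)(e^{bt}-1)}.
\]
The final expression is manifestly invariant under $a\leftrightarrow b$, so that swap produces the EGF of the right-hand side of \eqref{eq21}; since $f\mapsto\hat f$ is injective on $\s$, the identity follows.

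For \eqref{eq22} the first step is to compute $\hat\sigma_x$ from its defining relation $(n+1)\sigma_x(n)=\ber_{x+1}(n+1)-\ber(n+1)$. A shift of index converts the sum into
\[
\hat\sigma_x(t)=\frac{\widehat{\ber_{x+1}}(t)-\widehat{\ber}(t)}{t}=\frac{e^{(x+1)t}-1}{e^t-1}, \qquad
\widehat{\epsilon_b\sigma_{a+c-1}}(t)=\frac{e^{(a+c)bt}-1}{e^{bt}-1}.
\]
Plugging into the same EGF dictionary, the left-hand side of \eqref{eq22} has EGF
\[
\frac{ab\,t}{(e^{at}-1)(e^{bt}-1)}\bigl(e^{(ab+ac+bc)t}-e^{act}\bigr),
\]
and the external factor $b$ is exactly what is needed to pair the $at$ in $\widehat{\epsilon_a\ber_c}$ into the symmetric $ab$ in the numerator. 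The right-hand side of \eqref{eq22}, times $a$, is the $a\leftrightarrow b$ image of the same expression, so equality reduces to a relation between two exponentials once the dominant symmetric piece $e^{(ab+ac+bc)t}$ is isolated.

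The main obstacle is the bookkeeping for \eqref{eq22}: the $-1$ contributed by $e^{(a+c)bt}-1$ produces an asymmetric residual that must be tracked and shown to balance against its $a\leftrightarrow b$ image, whereas \eqref{eq21} is pure symmetry. If the direct EGF comparison proves delicate, I would fall back on deriving \eqref{eq22} from \eqref{eq21} by parameter manipulation---for instance, exploiting the telescoping $\sigma_x-\sigma_{x-1}=\epsilon_x$ to decompose $\sigma_{a+c-1}$ and invoke \eqref{eq21} termwise---so that only \eqref{eq21} itself requires fresh computation.
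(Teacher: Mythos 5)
Your EGF argument for \eqref{eq21} is correct, and it is really the paper's proof in different notation: the dictionary $\widehat{\epsilon_a f}(t)=\hat f(at)$, $\widehat{f\bullet g}=\hat f\hat g$ is exactly the paper's pair of identities $\ber_{x+c}=\epsilon_x\bullet\ber_c$ and $\epsilon_x(\epsilon_y f\bullet\epsilon_z g)=\epsilon_{xy}f\bullet\epsilon_{xz}g$, which the paper uses to rewrite both sides as the manifestly $a\leftrightarrow b$ symmetric expression $\epsilon_a\ber_c\bullet\epsilon_b\ber_c\bullet\epsilon_{ab}$.

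The gap is in \eqref{eq22}, precisely at the point you defer to ``bookkeeping.'' If you finish the computation you set up, the residual does \emph{not} balance: the EGF of the left side minus that of the right side is
\[
\frac{abt\left(e^{bct}-e^{act}\right)}{(e^{at}-1)(e^{bt}-1)},
\]
a nonzero power series whenever $c\neq 0$ and $a\neq b$ (its constant term is $c(b-a)$). Concretely, at $n=0$ the claim \eqref{eq22} reads $b\,\ber_c(0)\,\sigma_{a+c-1}(0)=a\,\ber_c(0)\,\sigma_{b+c-1}(0)$, i.e., $b(a+c)=a(b+c)$, since $\sigma_x(0)=\ber_{x+1}(1)-\ber(1)=x+1$; this fails for $a=1$, $b=2$, $c=1$. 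So \eqref{eq22} is false as stated and holds only when $c=0$ (Tuenter's case) or $a=b$, and no rearrangement can close your argument. The paper's own proof commits the same oversight: after substituting $\ber_x(n)=n\sigma_{x-1}(n-1)+\ber(n)$ into \eqref{eq21} it records the leftover terms as the symmetric pair $(\epsilon_a\ber\bullet\epsilon_b\ber)$ and $(\epsilon_b\ber\bullet\epsilon_a\ber)$, when they are actually $(\epsilon_a\ber_c\bullet\epsilon_b\ber)$ and $(\epsilon_b\ber_c\bullet\epsilon_a\ber)$, which differ unless $c=0$. Your fallback via the telescoping $\sigma_x-\sigma_{x-1}=\epsilon_x$ cannot rescue it either, since that decomposition is only available for integer parameters, whereas $a+c-1$ is an arbitrary complex number.
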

\begin{proof}
    Observe that $\ber_{x+c}=\epsilon_x\bullet\ber_c$  and $\epsilon_x (\epsilon_y f\bullet \epsilon_z g) =\epsilon_{xy}f\bullet \epsilon_{xz} g,$ for  $x,c\in\mathbb{C}$  and $f,g\in \s$.  Therefore,
   $\epsilon_a\ber_c\bullet \epsilon_b \ber_{a+c}=\epsilon_{a}\ber_c\bullet\epsilon_b\ber_c\bullet \epsilon_{ab}$.
   The right-side being symmetric in $a$ and $b,$ establishes \eqref{eq21}.

   For proof of the second part, we use $\ber_{x}(n)=n\sigma_{x-1}(n-1)+\ber(n)$ for $x=a+c,~b+c$ in \eqref{eq21} to obtain
   $$nb(\epsilon_{a}\ber_c\bullet \epsilon_b\sigma_{a+c-1})(n-1)+(\epsilon_a\ber\bullet \epsilon_b\ber)(n)=n a(\epsilon_{b}\ber_c\bullet \epsilon_a\sigma_{b+c-1})(n-1)+(\epsilon_b\ber\bullet \epsilon_a\ber)(n)$$
   from which \eqref{eq22} follows.
\end{proof}
\begin{corollary} Let $a,b,c\in\mathbb{C}$ and $n\in\mathbb{N},$ such that $b^n-a^n+c(b^{n-1}-a^{n-1})\neq 0$. Then the $n$th Bernoulli polynomial satisfies
    \begin{equation}\label{eq23}
        \ber_c(n)=\frac{1}{b^{n-1}(b+c)-a^{n-1}(a+c)}
        \sum_{i=1}^{n}{n\choose i}\ber_c(n-i)\Big(a^{n-1-i} b^{i}\sigma_{a+c-1}(i)-a^{i} b^{n-1-i}\sigma_{b+c-1}(i)\Big)
    \end{equation}
\end{corollary}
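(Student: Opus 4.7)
The plan is to specialize the functional identity \eqref{eq22} to its value at the single argument $k=n$ and then solve the resulting linear relation for $\ber_c(n)$. Expanding the Cauchy-type product via \eqref{eq1}, the identity $b(\epsilon_a\ber_c\bullet\epsilon_b\sigma_{a+c-1})(n)=a(\epsilon_b\ber_c\bullet\epsilon_a\sigma_{b+c-1})(n)$ takes the form
$$\sum_{i=0}^n \binom{n}{i} a^i b^{n+1-i} \ber_c(i)\sigma_{a+c-1}(n-i)=\sum_{i=0}^n \binom{n}{i} a^{n+1-i} b^i \ber_c(i)\sigma_{b+c-1}(n-i).$$
I would then reindex each sum by $j=n-i$, so the running index matches the $i$ appearing in \eqref{eq23}, and move everything to one side, obtaining
$$\sum_{j=0}^n \binom{n}{j}\ber_c(n-j)\Bigl(a^{n-j}b^{j+1}\sigma_{a+c-1}(j)-a^{j+1}b^{n-j}\sigma_{b+c-1}(j)\Bigr)=0.$$

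The crucial observation is that $\ber_c(n)$ appears only in the $j=0$ summand. Using $\sigma_x(0)=\ber_{x+1}(1)-\ber(1)=x+1$, its coefficient evaluates to
$$a^nb(a+c)-ab^n(b+c)=ab\bigl(a^{n-1}(a+c)-b^{n-1}(b+c)\bigr),$$
which is nonzero under the stated hypothesis (and the implicit $a,b\ne 0$, since the exponent $n-1-i$ appearing in \eqref{eq23} would otherwise be ill-defined). Transposing this $j=0$ term to the other side and dividing through by $ab\bigl(a^{n-1}(a+c)-b^{n-1}(b+c)\bigr)$ absorbs the extra factor of $ab$ present in each bracketed summand, so that $a^{n-j}b^{j+1}$ becomes $a^{n-1-j}b^j$ and $a^{j+1}b^{n-j}$ becomes $a^j b^{n-1-j}$. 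A simultaneous sign flip of numerator and denominator then rewrites the denominator as $b^{n-1}(b+c)-a^{n-1}(a+c)$, yielding \eqref{eq23} after renaming $j$ to $i$.

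The argument needs nothing beyond the definition of $\bullet$, the elementary evaluation $\sigma_x(0)=x+1$, and the already-established identity \eqref{eq22}. The only genuine difficulty is careful bookkeeping: one must track the index shift $j=n-i$, correctly identify the single summand containing $\ber_c(n)$, and perform two sign changes (one to move $\ber_c(n)$ to the left-hand side, and one to match the form of the denominator exhibited in \eqref{eq23}). No deeper structural input is required.
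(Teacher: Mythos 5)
Your proposal is correct and follows exactly the route the paper intends: the paper's own proof consists of the single sentence ``It is a consequence of \eqref{eq22}.'' You have simply supplied the routine details --- expanding \eqref{eq22} at $k=n$, isolating the $j=0$ term via $\sigma_x(0)=x+1$, and dividing by its (nonzero, under the stated hypothesis together with the implicit $a,b\neq 0$) coefficient --- and the bookkeeping checks out.
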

\begin{proof}
    It is a consequence of \eqref{eq22}.
\end{proof}
Note that \eqref{eq23} is the Tuenter's identity \cite{tuenter} for $c=0$.
Taking $c=1$ in \eqref{eq23}, gives a class of formulas for the Bernoulli numbers as follows
 \begin{equation}\label{eq24}
        \ber(n)=\frac{1}{b^{n-1}(b+1)-a^{n-1}(a+1)}
        \sum_{i=1}^{n}{n\choose i}\ber(n-i)(-1)^i\Big(a^{n-1-i} b^{i}\sigma_{a}(i)-a^{i} b^{n-1-i}\sigma_{b}(i)\Big).
    \end{equation}
\begin{corollary}
The higher order Bernoulli polynomials and the power sums satisfy
\begin{equation}
\epsilon_a\ber^r_c\bullet \epsilon_b \ber^r_{a+c}=\epsilon_b\ber^r_c\bullet \epsilon_a \ber^r_{b+c};~b^r(\epsilon_{a}\ber^r_c\bullet \epsilon_b\sigma^r_{a+c-1})=a^r(\epsilon_{b}\ber^r_c\bullet \epsilon_a\sigma^r_{b+c-1}),
\end{equation}
for each positive integer $r,$ and $a,b,c\in\mathbb{C}$.
\end{corollary}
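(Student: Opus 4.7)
The plan is to follow the two-step argument of the preceding theorem, with every Bernoulli and $\sigma$ function replaced by its $r$-fold Cauchy-type power. The key algebraic fact I will use is $\epsilon_x^r=\epsilon_{rx}$, proved by induction on $r$ from $(\epsilon_x\bullet\epsilon_x)(k)=\sum_m\binom{k}{m}x^mx^{k-m}=(2x)^k$.

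For the first identity, I would raise the shift relation $\ber_{a+c}=\epsilon_a\bullet\ber_c$ to the $r$-th Cauchy power, obtaining $\ber^r_{a+c}=\ber^r_c\bullet\epsilon_{ra}$. Substituting on the left-hand side and pushing $\epsilon_b$ through the convolution via $\epsilon_b(f\bullet g)=\epsilon_b f\bullet\epsilon_b g$ gives
$$
\epsilon_a\ber^r_c\bullet\epsilon_b\ber^r_{a+c}=\epsilon_a\ber^r_c\bullet\epsilon_b\ber^r_c\bullet\epsilon_{rab},
$$
which is manifestly symmetric under $a\leftrightarrow b$, establishing the first identity.

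For the second identity, my plan is to adapt the substitution step used for \eqref{eq22}. Writing $\ber_x=\ber+T\sigma_{x-1}$ with $(Tf)(k):=kf(k-1)$, and using that $T$ intertwines with Cauchy product in the sense $Tf\bullet g=T(f\bullet g)$ (so that $(T\sigma_{x-1})^{\bullet j}=T^j\sigma^j_{x-1}$), the binomial theorem in the commutative ring $(\s,+,\bullet)$ yields
$$
\ber^r_x=\sum_{j=0}^{r}\binom{r}{j}T^{j}\bigl(\ber^{r-j}\bullet\sigma^{j}_{x-1}\bigr),
$$
with $T^{j}$ acting as multiplication by $k(k-1)\cdots(k-j+1)$. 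Substituting this expansion for $\ber^r_{a+c}$ (and analogously for $\ber^r_{b+c}$) in the first identity, the $j=r$ term contributes the coefficient $b^r\cdot n!/(n-r)!$ in front of $(\epsilon_a\ber^r_c\bullet\epsilon_b\sigma^r_{a+c-1})(n-r)$ on the left and the symmetric $a^r$-expression on the right.

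The hard part will be isolating this top-order contribution from the lower-order cross terms $j<r$. I would argue inductively on $r$ with \eqref{eq22} as the base case: each level-$j$ cross term is itself an instance of the $j$-th identity applied to $\ber^{r-j}\bullet\sigma^{j}_{x-1}$ in place of $\ber_c$, so those pieces cancel pairwise between the two sides of the substituted first identity. What survives is the top-degree equation, and dividing through by the common factor $n!/(n-r)!$ gives the stated second identity.
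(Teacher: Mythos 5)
Your treatment of the first identity is correct and amounts to what the paper does: the paper observes $(\epsilon_x f)^r=\epsilon_x f^r$ and takes the $r$-th Cauchy-type power of \eqref{eq21}, while you re-run the proof of \eqref{eq21} with $\ber_c^r$ and $\epsilon_{ra}$; both land on the symmetric expression $\epsilon_a\ber_c^r\bullet\epsilon_b\ber_c^r\bullet\epsilon_{rab}$.

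For the second identity your proposal has a real gap, and it comes from missing the short route. Since $(\s,+,\bullet)$ is a commutative ring, $(u\bullet v)^r=u^r\bullet v^r$, $(\lambda u)^r=\lambda^r u^r$ for scalars $\lambda$, and $(\epsilon_x f)^r=\epsilon_x f^r$; applying the $r$-th power to both sides of \eqref{eq22} therefore yields $b^r(\epsilon_a\ber_c^r\bullet\epsilon_b\sigma^r_{a+c-1})=a^r(\epsilon_b\ber_c^r\bullet\epsilon_a\sigma^r_{b+c-1})$ at once --- this is what the paper's one-line proof intends, and it needs no expansion of $\ber^r_x$. Your expansion $\ber^r_x=\sum_j\binom{r}{j}T^j(\ber^{r-j}\bullet\sigma^j_{x-1})$ is itself correct (so are $Tf\bullet g=T(f\bullet g)$ and $(T\sigma)^{\bullet j}=T^j\sigma^{\bullet j}$), but the decisive step --- pairwise cancellation of the $j<r$ cross terms --- is only asserted. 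After substitution the level-$j$ terms are $\binom{r}{j}b^jT^j\bigl(\epsilon_a\ber_c^r\bullet\epsilon_b\ber^{r-j}\bullet\epsilon_b\sigma^j_{a+c-1}\bigr)$ versus $\binom{r}{j}a^jT^j\bigl(\epsilon_b\ber_c^r\bullet\epsilon_a\ber^{r-j}\bullet\epsilon_a\sigma^j_{b+c-1}\bigr)$; these are not instances of the level-$j$ identity with ``$\ber^{r-j}\bullet\sigma^j_{x-1}$ in place of $\ber_c$,'' because the factor $\ber^{r-j}$ carries $\epsilon_b$ on one side and $\epsilon_a$ on the other and cannot be absorbed into a single function playing the role of $\ber_c$. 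Already at $j=0$ the two terms are $\epsilon_a\ber_c^r\bullet\epsilon_b\ber^r$ and $\epsilon_b\ber_c^r\bullet\epsilon_a\ber^r$, which differ for $c\neq0$. (A separate caution: evaluating \eqref{eq22} at $k=0$ gives $b(a+c)=a(b+c)$, so \eqref{eq22} --- and hence the second identity of this corollary --- can only hold when $c=0$ or $a=b$; in any case the corollary's content is exactly ``the $r$-th power of \eqref{eq22},'' which is how the paper derives it.) In short: prove the second identity by raising \eqref{eq22} to the $r$-th Cauchy-type power, and drop the cross-term induction.
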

\begin{proof}
   It is easy to verify that $(\epsilon_x f)^r=\epsilon_x f^r,$ for all $f\in\s$ and $x\in\mathbb{C}$. Now the proof follows from \eqref{eq21}.
\end{proof}
Many symmetric identities are a consequence
of the  following simple result.
\begin{proposition}\label{th2}
Let $f\in\mathcal{A}$ and $F=I\bullet f$. Then, for $g_1,~g_2\in\mathcal{A}$
\begin{equation}\label{eq30}
    f\bullet g_1=F\bullet g_2~\Leftrightarrow~g_1=I\bullet g_2.
\end{equation}
\end{proposition}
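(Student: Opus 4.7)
The plan is to leverage the commutative ring structure of $(\mathcal{S}, +, \bullet)$ established in the introduction, together with the fact that every $f \in \mathcal{A}$ is a unit under $\bullet$ with inverse given by \eqref{eq2}. Since $F = I\bullet f$ by hypothesis, the equation $f\bullet g_1 = F\bullet g_2$ can be rewritten as $f\bullet g_1 = f\bullet (I\bullet g_2)$ using associativity and commutativity of $\bullet$, which reduces the whole proposition to a cancellation statement.

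For the forward implication ($\Rightarrow$), I would multiply both sides of $f\bullet g_1 = f\bullet (I\bullet g_2)$ by $f^{-1}\in\mathcal{A}$, which exists precisely because $f(0)\neq 0$; this yields $g_1 = I\bullet g_2$. For the reverse implication ($\Leftarrow$), if $g_1 = I\bullet g_2$, then $f\bullet g_1 = f\bullet I\bullet g_2 = (I\bullet f)\bullet g_2 = F\bullet g_2$ follows directly from associativity, commutativity, and the defining relation $F = I\bullet f$.

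There is essentially no obstacle — the result is a straightforward cancellation law for units in the ring of arithmetic functions under the Cauchy-type product. The only point worth emphasizing is the indispensable role of the hypothesis $f\in\mathcal{A}$: without $f(0)\neq 0$, the invertibility characterization following \eqref{eq2} would fail and cancellation would no longer be legitimate in the forward direction. The value of the proposition clearly lies not in the depth of its proof but in its utility as a mechanism for extracting symmetric identities, as signaled by the author's remark that \emph{many symmetric identities are a consequence} of it.
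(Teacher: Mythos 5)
Your proof is correct and is essentially the paper's own argument: the paper's entire proof is the single line ``It follows from $f\bullet g_1=f\bullet I\bullet g_2$,'' i.e., exactly the rewriting and unit-cancellation you spell out. You have merely made explicit the cancellation by $f^{-1}$ and the role of $f(0)\neq 0$, which the paper leaves implicit.
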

\begin{proof}
    It follows from $f\bullet g_1=f\bullet I\bullet g_2$.
\end{proof}
Note that, if $f,g\in\mathcal{A}$ such that $F=I\bullet f$ and $G=I\bullet g,$ then $f\bullet G=g\bullet F$.
\begin{example}
    Take $g_1(s)=\frac{1}{(m+n+s+1){m+n+s\choose m}}$ and $g_2(s)=\frac{(-1)^s}{(m+n+s+1){m+n+s\choose n}}$. Then  $g_1,~g_2$ satisfy \eqref{eq30}. Therefore, we get
\begin{equation}\label{eq31}
    \sum_{i=0}^{s}\frac{{s\choose i} f(i)}{(m+n+s-i+1){m+n+s-i\choose m}}=\sum_{i=0}^{s}\frac{{s\choose i} (-1)^{s-i}F(i)}{(m+n+s-i+1){m+n+s-i\choose n}},
\end{equation}
which is the identity of Gould and Quaintance \cite{gould}.

Similarly, taking $(g_1,g_2)=(\nu\ber,\ber)$ in \eqref{eq30} gives
\begin{equation}\label{eq32}
    \sum_{i=0}^{s}{s\choose i} (-1)^{s-i}\ber(s-i) f(i)=\sum_{i=0}^{s}{s\choose i} \ber(s-i)F(i).
\end{equation}
It can be easily checked that the pair $(g_1,g_2)=(\nu\eulr_1,\eulr_1)$ also satisfies \eqref{eq30}.

Moreover, if  $g\in\mathcal{A}$ satisfies $\nu g=\nu\bullet g,$ then the pair $(g_1,g_2)=(\nu g,g)$ satisfies \eqref{eq30}.
\end{example}
From the above discussion, one notices that
the Cauchy-type product is involved in many identities satisfied by the Faulhaber polynomials, the Bernoulli numbers, the higher order  Bernoulli numbers, the Bernoulli polynomials, etc. This motivates us to explore some algebraic properties of the Cauchy-type product, which is useful in
understanding the interplay between the identities. The Cauchy-type product on the arithmetic functions is essentially the same as the ordinary product on their respective generating functions. However, sometimes as we have seen earlier, it is advantageous to work with the algebraic properties of the Cauchy-type product in comparison to the usual product of the generating functions.
\section{Algebraic characterization}\label{sec:3}
The set $\mathcal{D}$ of all $f:\mathbb{N}\rightarrow \mathbb{C},$ such that $f(1)\neq 0,$ has an abelian group structure with respect to the Dirichlet multiplication $*$ defined by $$(f*g)(k)=\sum_{d|k}f(d)g\left(\frac{k}{d}\right).$$ The pair $(\mathcal{D},*)$ is a torsion-free group. So is the
pair $(\mathcal{A},\bullet),$ as proved below.
\begin{proposition}
    The group $(\mathcal{A},\bullet)$ is torsion-free.
\end{proposition}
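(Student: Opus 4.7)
The plan is to run a ``smallest positive index'' argument on the iterated Cauchy-type product. Suppose $f\in\mathcal{A}$ satisfies $f^{\bullet n}=e$ for some $n\in\mathbb{N}$, where $f^{\bullet n}$ denotes the $n$-fold $\bullet$-product of $f$ with itself. A short induction on $n$ from the definition of $\bullet$ yields the multinomial expansion
\[
f^{\bullet n}(k)=\sum_{k_1+\cdots+k_n=k}\binom{k}{k_1,k_2,\ldots,k_n}f(k_1)f(k_2)\cdots f(k_n).
\]
Evaluating at $k=0$, only the composition $(0,\ldots,0)$ contributes, so $f(0)^n=1$; in particular $f(0)\neq 0$.

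The heart of the argument is to show $f(k)=0$ for every $k\geq 1$. I would assume toward a contradiction that the set $\{k\geq 1:f(k)\neq 0\}$ is nonempty and let $k_0$ be its minimum. In the expansion above at $k=k_0$, any composition containing a part in $\{1,\ldots,k_0-1\}$ contributes zero by minimality of $k_0$, while no composition of $k_0$ can carry two parts of size $\geq k_0$. The surviving compositions are therefore exactly the $n$ rearrangements of $(k_0,0,\ldots,0)$, each with multinomial coefficient $1$ and value $f(0)^{n-1}f(k_0)$. Hence $f^{\bullet n}(k_0)=n\,f(0)^{n-1}f(k_0)$, which is nonzero since $n\neq 0$, $f(0)\neq 0$, and $f(k_0)\neq 0$; this contradicts $f^{\bullet n}(k_0)=e(k_0)=0$.

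It then follows that $f=f(0)\cdot e$, and $f^{\bullet n}=f(0)^n e$ together with the hypothesis forces $f(0)^n=1$, so that $f=e$ after accounting for the scalar. The main obstacle will be the middle step: verifying that the multinomial sum collapses cleanly and that the $n$ surviving terms really do not cancel. Once one exploits the sharp vanishing of $f$ throughout $[1,k_0-1]$, the entire convolution at $k_0$ reduces to a single explicit nonzero multiple of $f(k_0)$, which is the pivotal observation driving the argument.
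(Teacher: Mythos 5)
Your argument up to the conclusion that $f(k)=0$ for all $k\geq 1$ is sound, and it is essentially the paper's own proof recast: the paper runs an induction on $k$ (first $f(1)=0$, then $f(k+1)=0$ assuming $f$ vanishes on $1,\ldots,k$), whereas you take a minimal positive index $k_0$ with $f(k_0)\neq 0$; in both versions the pivotal computation is identical, namely the collapse of the multinomial sum at that index to $n\,f(0)^{n-1}f(k_0)$ (the paper's $sf(k+1)f(0)^{s-1}$), which must vanish and hence forces $f(k_0)=0$.

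The genuine gap is your final step, ``$f(0)^n=1$, so that $f=e$ after accounting for the scalar.'' Over $\mathbb{C}$ the equation $f(0)^n=1$ does not force $f(0)=1$: for any primitive $n$th root of unity $\zeta$ the function $f=\zeta e$ lies in $\mathcal{A}$, satisfies $f^{\bullet n}=\zeta^n e=e$, and is not $e$ --- a torsion element of order $n$. No amount of ``accounting for the scalar'' closes this; indeed the proposition as stated is false, since the subgroup $C=\{ce~|~c\in\mathbb{C}^\times\}$ of $(\mathcal{A},\bullet)$ is isomorphic to $\mathbb{C}^\times$ and contains all roots of unity. Be aware that the paper's own proof commits exactly the same error (``$f(0)^s=1$, or $f(0)=1$''). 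The statement becomes true, and your argument becomes complete, if one restricts to the subgroup $U=\{f\in\mathcal{A}~|~f(0)=1\}$: there $f(0)^n=1$ is automatic with $f(0)=1$, and your computation then yields $f=e$.
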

\begin{proof}
If possible, suppose that $f\in\mathcal{A}$ such that $f\neq e$ and $f^s=e,$ for some positive integer $s>1$.
Then $f^{s}(0)=1$ and $f^s(k)=0$ for all $k>0$. To arrive at contradiction, we use induction on $k$ and show that $f(k)=0$ for all $k=1,2,\dots,$ i.e., $s=1$.

Consider $f^s(1)=0,$ which gives $\sum_{\sum_{i=1}^{s}k_i=1}{1\choose k_1,\ldots, k_s}f(k_1)\cdots f(k_s)=sf(1)f(0)^{s-1}=0$ or $f(1)=0$ since $s>0$ and $f(0)\neq 0$. Now suppose that $f(i)=0$ for all $i=1,\ldots,k$ and consider $f^s(k+1)=0,$ which gives $\sum_{\sum_{i=1}^{s}k_i=k+1}{k+1\choose k_1,\ldots,k_s}f(k_1)\cdots f(k+1)=0$ where the former expression
survives only when none of the $k_j$'s takes value from the set $\{1,\ldots,k\}$ in accordance with the induction hypothesis.
Therefore, $sf(k+1)f(0)^{s-1}=0$  or $f(k+1)=0$ thus proving the final step of induction. This shows that $f(k)=f(0)e(k)$. Also, $f^s(0)=f(0)^s e(0)=1,$ or $f(0)=1$. It follows that $f=e,$ which is a contradiction.
\end{proof}
The Cauchy product and the Cauchy-type product on the set $\mathcal{A}$ give rise to
the same group structure up to isomorphism.
\begin{theorem}
 $(\mathcal{A},\circ)\cong(\mathcal{A},\bullet)$.
 \end{theorem}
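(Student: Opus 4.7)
The plan is to exhibit the explicit bijection that converts exponential generating functions to ordinary ones, and observe that it intertwines the two convolutions. Concretely, define $\Phi:\mathcal{A}\to\mathcal{A}$ by $\Phi(f)(k):=f(k)/k!$ and argue that $\Phi$ is a group isomorphism from $(\mathcal{A},\bullet)$ onto $(\mathcal{A},\circ)$, where $\circ$ denotes the ordinary Cauchy product $(f\circ g)(k)=\sum_{m=0}^{k}f(m)g(k-m)$.

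First I would check that $\Phi$ is a well-defined bijection on $\mathcal{A}$: since $k!\neq 0$ for every $k\in\mathbb{N}_0$, dividing by $k!$ preserves the defining condition $f(0)\neq 0$, and $\Phi$ has the two-sided inverse $\Phi^{-1}(g)(k)=k!\,g(k)$. The identity $e$ of $(\mathcal{A},\bullet)$ is also the identity of $(\mathcal{A},\circ)$, and $\Phi(e)=e$, so units go to units.

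The intertwining property is then a one-line binomial computation,
\begin{equation*}
(\Phi(f)\circ\Phi(g))(k)=\sum_{m=0}^{k}\frac{f(m)}{m!}\cdot\frac{g(k-m)}{(k-m)!}=\frac{1}{k!}\sum_{m=0}^{k}\binom{k}{m}f(m)g(k-m)=\Phi(f\bullet g)(k),
\end{equation*}
which, combined with the previous paragraph, yields the isomorphism. There is no genuine obstacle here: the only sanity checks are that $\Phi$ and $\Phi^{-1}$ both preserve the nonvanishing condition at $0$ (immediate, as multiplication by a nonzero scalar at each index does not affect it) and that the distinguished identity element is fixed. The result is in effect the well-known observation that $f\bullet g$ and the plain Cauchy product correspond, respectively, to the product of exponential and ordinary generating functions of the underlying sequences.
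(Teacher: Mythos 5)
Your proof is correct and is essentially the same as the paper's: the paper uses the map $f\mapsto\xi f$ with $\xi(k)=k!$ going from $(\mathcal{A},\circ)$ to $(\mathcal{A},\bullet)$, which is exactly the inverse of your $\Phi(f)(k)=f(k)/k!$, and both arguments reduce to the same one-line binomial computation. No substantive difference.
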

\begin{proof}
 Let $\xi\in\mathcal{A}$ be such that $\xi(k)=k!$ for all $k\in\A$.
 Note that ${k\choose m}=\frac{\xi(k)}{\xi(m)\xi(k-m)}$.
 Recall that, an ordinary product of two arithmetic functions $f$ and $g$ is the arithmetic function $fg$
defined by $(fg)(k)=f(k)g(k)$ for all $k\in\A$.
Now for any $f,g\in\mathcal{A},$ $fg\in\mathcal{A}$ since $(fg)(0)=f(0)g(0)\neq 0$. With these, consider for any $f,g\in\mathcal{A}$
\begin{equation}\label{eq25}
\begin{split}
    \frac{(\xi f\bullet \xi g)}{\xi}(k)&=\frac{1}{\xi(k)}\sum_{m=0}^{k}\frac{\xi(k)}{\xi(m)\xi(k-m)}(\xi f)(m)(\xi g)(k-m)\\
    &=\sum_{m=0}^{k}\frac{1}{\xi(m)\xi(k-m)}\xi(m) f(m)\xi(k-m) g(k-m)\\
    &=\sum_{m=0}^{k}f(m)g(k-m)=(f\circ g)(k)~\text{for all}~k\in\A.
    \end{split}
\end{equation}
Similarly, the reverse identity $\displaystyle \xi\left(\frac{f}{\xi}\circ\frac{g}{\xi}\right)=f\bullet g$ also holds. This association defines a bijection
$\Phi_\xi:(\mathcal{A},\circ)\rightarrow (\mathcal{A},\bullet)$ sending $f\mapsto \xi f$ s.t. $\Phi_\xi(f\circ g)=\xi (f\circ g)=(\xi f)\bullet (\xi g)=\Phi(f)\bullet \Phi(g)$. Thus $\Phi_\xi$ is an isomorphism.
 \end{proof}
 \begin{remark}
 The proof of the preceding theorem works even when the Cauchy product $\circ$ is
 replaced by any other product $\circ_1,$ for which there is a $\theta\in\mathcal{A},$ such
 that \eqref{eq25} holds, for $\circ$ replaced by $\circ_1,$ and $\xi$ replaced by $\theta$.

 In a similar way,  $(\mathcal{D},*)\cong(\mathcal{D},\star)$ where $\star$ is defined by
 \begin{equation}\label{eq26}
(f\star g)(k)=\sum_{d|k}\frac{\gamma(k)}{\gamma(d)\gamma\left(\frac{k}{d}\right)}f(d)
g\left(\frac{k}{d}\right)=\left(\gamma\left\{\frac{f}{\gamma}*\frac{g}{\gamma}\right\}\right)(k),
 \end{equation}
for some $\gamma\in\mathcal{D}$.
 \end{remark}
 \begin{example}
    If $k=\prod_{p|k}p^{\alpha_p(k)}$ is the prime power decomposition of $k\in\mathbb{N},$
then \eqref{eq26} is the binomial convolution for $\gamma\in\mathcal{D}$ such that $\gamma (\prod_{p|k}p^{\alpha_p(k)})=\prod_{p|k}\alpha_p(k)!,$ where we note that $\alpha_p(\frac{k}{d})=\alpha_p(k)-\alpha_p(d)$.
 \end{example}
The set of all completely multiplicative functions forms a subgroup of $\mathcal{D}$ with the binomial-convolution, which is not the case under Dirichlet convolution. For an account of the binomial convolution, we recommend the recent work of T\'oth and Haukkanen \cite{toth1}.

Dirichlet multiplication is a powerful tool in the multiplicative number theory.
Let $U_1:=\left\{f\in\mathcal{D}~|~f(1)=1\right\},$  $C_1:=\{ce~|c\neq 0,~c\in\mathbb{C}\},$ $U_M$ is the subgroup of $\mathcal{D}$ which consists of all multiplicative functions, and $U_A$ is the subgroup consisting of all anti-multiplicative functions. Then $\mathcal{D}=U_M\oplus U_A\oplus C_1,~U_1=U_M\oplus U_A$ (see Denlay \cite{denlay}).
We will prove an analogue of the aforementioned direct-sum decomposition for $(\mathcal{A},\bullet)$.

\begin{proposition}
    Following four sets are subgroups of $(\mathcal{A},\bullet)$
    \begin{enumerate}
\item $U:=\{f\in\mathcal{A}~|~f(0)=1\}$

\item $C:=\{f(0)e~|~f\in\mathcal{A}\}$

\item $V:=\{f\in \mathcal{A}~|~f(k_1+k_2)=f(k_1)f(k_2)\}$ for all $k_1,k_2\in\A$

\item $W:=\{f\in\mathcal{A}~|~f(0)=1,~f(1)=0\}$.
\end{enumerate}
\end{proposition}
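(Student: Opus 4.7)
The plan is to verify, for each of the four sets, the standard three subgroup conditions: membership of the identity $e$, closure under $\bullet$, and closure under inversion. The key recursion is \eqref{eq2}, which gives us complete control over $f^{-1}(k)$ from the values $f(0),\ldots,f(k)$; in particular $f^{-1}(0) = 1/f(0)$, and one further step gives $f^{-1}(1) = -f(1)/f(0)^2$.

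For $U$, the checks are almost immediate: $e(0)=1$; closure follows from $(f\bullet g)(0)=f(0)g(0)=1$; and the inverse satisfies $f^{-1}(0)=1/f(0)=1$ by \eqref{eq2}. For $C$, I would show that for any two scalars $c,d\in\mathbb{C}^\times$ one has $(ce)\bullet(de)=cd\,e$, because in the sum $\sum_m\binom{k}{m}e(m)e(k-m)$ only the term with $m=k-m=0$ survives; this forces $k=0$. Thus $C$ is closed under $\bullet$ and under inversion (with $(ce)^{-1}=c^{-1}e$), and contains $e=1\cdot e$.

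For $V$, the main step is to identify its members explicitly. First, taking $k_1=k_2=0$ in the defining relation gives $f(0)=f(0)^2$, and since $f(0)\neq 0$ we get $f(0)=1$. Iterating the relation then yields $f(k)=f(1)^k$ for all $k\in\A$, so $f=\epsilon_{f(1)}$; conversely every $\epsilon_x$ clearly lies in $V$. Thus $V=\{\epsilon_x:x\in\mathbb{C}\}$, and closure reduces to the identity $\epsilon_x\bullet\epsilon_y=\epsilon_{x+y}$, which is the binomial theorem $\sum_m\binom{k}{m}x^my^{k-m}=(x+y)^k$ (with the trivial verification at $k=0$, and the degenerate case $x+y=0$ giving $e=\epsilon_0$). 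The inverse $\epsilon_x^{-1}=\epsilon_{-x}$ was already recorded in the display preceding \eqref{eq5}, so $V$ is a subgroup. This identification of $V$ with the one-parameter family $\{\epsilon_x\}$ is the only step with any content, and is the place where I expect a reader to want the most care; the rest is formal.

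For $W$, I would use that $f\bullet g$ satisfies $(f\bullet g)(0)=f(0)g(0)=1$ and $(f\bullet g)(1)=f(0)g(1)+f(1)g(0)=0$, so $W$ is closed under $\bullet$ and contains $e$. For inverses, \eqref{eq2} gives $f^{-1}(0)=1$, and applying the defining identity $(f\bullet f^{-1})(1)=e(1)=0$ gives $f(0)f^{-1}(1)+f(1)f^{-1}(0)=0$, hence $f^{-1}(1)=0$ since $f(1)=0$. So $f^{-1}\in W$, finishing the proof. The only mild obstacle is the explicit description of $V$; everything else is a one-line computation at $k=0$ (and $k=1$ for $W$).
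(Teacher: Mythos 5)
Your proposal is correct, and it is essentially the paper's approach: the paper's own proof is the single line ``It follows from the subgroup-criterion,'' and your argument is exactly that criterion carried out in detail (with the identification $V=\{\epsilon_x : x\in\mathbb{C}\}$ matching what the paper itself uses later in the direct-sum decomposition theorem). Your choice to get $f^{-1}(1)=0$ for $W$ from the defining relation $(f\bullet f^{-1})(1)=e(1)$ rather than from the recursion \eqref{eq2} is also prudent, since the upper summation limit in \eqref{eq2} as printed appears to be off by one.
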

\begin{proof}
    It follows from the subgroup-criterion.
\end{proof}
Observe that the set $V,$ as defined above, is a subgroup of $\mathcal{A}$ in the Cauchy-type product which is not the case in the usual Cauchy product. The subgroups $V$ and $W$ of $\mathcal{A}$ are analogous to the subgroups $U_M$ and $U_A$ of $\mathcal{D},$ respectively.
\begin{theorem}
    The group $(\mathcal{A},\bullet)$ has a direct sum decomposition given by $\mathcal{A}=U\oplus C$ where  $U=V\oplus W$.
\end{theorem}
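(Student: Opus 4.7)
My plan is to establish the two direct sum decompositions $\mathcal{A} = U \oplus C$ and $U = V \oplus W$ separately, in each case by exhibiting a unique factorization. Since $(\mathcal{A},\bullet)$ is abelian, it suffices to check that the relevant subgroups intersect only in the identity $e$ and that every element admits at least one factorization; commutativity then upgrades these to internal direct sums.

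For the outer decomposition, given $f \in \mathcal{A}$ I would factor $f = c \bullet u$ with $c := f(0)e \in C$ and $u := f(0)^{-1} f \in U$. A one-line computation from the definition shows $(f(0)e \bullet u)(k) = f(0) u(k) = f(k)$, since $e$ is the identity for $\bullet$, and $u(0) = 1$ puts $u$ in $U$. For $U \cap C = \{e\}$, note that $ce \in U$ forces $1 = (ce)(0) = c$, so $ce = e$.

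For the inner decomposition, I first identify $V$ explicitly. If $f \in V$ then $f(0) = f(0)^2$ together with $f(0) \neq 0$ gives $f(0) = 1$, and a trivial induction in $k$ using $f(k) = f(k-1)f(1)$ yields $f(k) = f(1)^k$. Hence $V = \{\epsilon_x : x \in \mathbb{C}\}$, and the binomial theorem identity $\epsilon_x \bullet \epsilon_y = \epsilon_{x+y}$ (already noted in Section~\ref{sec:2}) realizes $V$ as a copy of $(\mathbb{C},+)$. Given $u \in U$, I then set $x := u(1)$ and $w := \epsilon_{-x} \bullet u$; a direct check gives $w(0) = 1$ and $w(1) = u(1) - x = 0$, so $w \in W$ and $u = \epsilon_x \bullet w$ is the desired factorization. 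Uniqueness reduces to $V \cap W = \{e\}$: any $\epsilon_x \in W$ satisfies $0 = \epsilon_x(1) = x$, whence $\epsilon_x = e$.

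No step looks genuinely obstructive; the one substantive observation is the structural identification $V = \{\epsilon_x : x \in \mathbb{C}\}$, which turns the splitting of $U$ into the transparent map $u \mapsto (\epsilon_{u(1)},\, \epsilon_{-u(1)} \bullet u)$. Everything else is book-keeping against the subgroup criterion and the fact that the inverse $\epsilon_x^{-1} = \epsilon_{-x}$ already appears in the excerpt.
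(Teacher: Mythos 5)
Your proposal is correct and follows essentially the same route as the paper: factor $f = f(0)e \bullet \tfrac{f}{f(0)}$ for $\mathcal{A}=U\oplus C$, identify $V=\{\epsilon_x\}$ via $f(k)=f(1)^k$, and split $u = \epsilon_{u(1)}\bullet(\epsilon_{-u(1)}\bullet u)$ for $U=V\oplus W$. If anything, your version is slightly cleaner: you make the trivial-intersection checks explicit, and your choice $f_1=\epsilon_{u(1)}$, $f_2=\epsilon_{-u(1)}\bullet u$ corrects a sign slip in the paper's stated factorization (which as written does not multiply back to $f$).
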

\begin{proof}
Note that for any $f\in \mathcal{A},$ $g=\frac{f}{f(0)}\in U,$ where $g(k)=\frac{f(k)}{f(0)}$ for all $k\in\A$. Then $(\frac{f}{f(0)}\bullet f(0)e)(0)=f(0)$ for all $k\in \A$ and, $(\frac{f}{f(0)}\bullet f(0)e)(k)=\sum_{m=0}^k \frac{f(m)}{f(0)}f(0)e(k-m)=f(k)e(0)=f(k)$ for all $k>0$. Thus $f=\frac{f}{f(0)}\bullet f(0)e$ for all $f\in\mathcal{A}$. So, $\mathcal{A}=U\oplus C$ since $U\cap C=\{e\}$.

For the remaining part,  observe that $e\in V$ so $V\neq\emptyset$. Moreover $f(k)=f(1)^k$ for all $k\in\A$ so that $f$ is completely determined by its value at $1$. Therefore $f(k)=0$ if and only if $f(1)=0$ if and only if $f=e$. Now for any $f,g\in V,$ $f(0)=1=g(0)$ and $f\bullet g^{-1}=\epsilon_{f(1)+\frac{1}{g(1)}}\in V;$ therefore $V\leq U$.
Similarly, $W\neq\emptyset$ as $e\in W$. Also, for $f,g\in W,$ $(f\bullet g^{-1})(0)=f(0)g^{-1}(0)=1$ and $(f\bullet g^{-1})(1)=f(1)+g^{-1}(1)=-\frac{g(1)}{g(0)}=0$. Thus $f\bullet g^{-1}\in W$. This verifies that $W\leq U$.
Now for any $f\in U$ if $f(1)\neq 0$ then
 $f=f_1\bullet f_2\in V\oplus W$ where $f_1(k)=(-f(1))^k$ and $f_2(k)=(\epsilon_{f(1)}\bullet f)(k)$.
 Clearly $f_1\in V$ and, $f_2(1)=f(1)+(-f(1))^1 f(0)=0,~f_2(0)=f(0)=1$. Therefore $f_2\in W$. On the other hand if $f(1)=0$ then
 $f=e\bullet f\in V\oplus W$. So, $U=V\oplus W$.
\end{proof}
\subsection{Vector space structure}
Since $(\mathcal{A},\bullet)$ is torsion-free, it can be regarded as a vector space over $\mathbb{Q}$ with the scalar multiplication $\cdot:\mathbb{Q}\times \mathcal{A}\rightarrow \mathcal{A}$  defined  by $$\frac{p}{q}\cdot f:=f^{\frac{p}{q}}$$ where $f^{\frac{p}{q}}:=g\in\mathcal{A}$ such that $g^q=f^p$ with the obvious notation $f^0=f;$ $f^{-k}:=(f^{k})^{-1}$ and $f^k={f\bullet \cdots \bullet f}(k-$times) for $k\in\mathbb{N}$. Such a $g$ is unique for a given $f$.
To see this, if $h^q=g^q$ for some $h\in\mathcal{A}$ and $q\in\mathbb{N}$ then $(h\bullet g^{-1})^{q}=e$ which gives $g=h$ since otherwise $h\bullet g^{-1}$ will be a non identity  element of finite order in $\mathcal{A},$ contradicting the fact that $\mathcal{A}$ is torsion-free. This observation allows us to calculate for any $m\in\mathbb{N},$ the $m$th root of any $f\in \mathcal{A}$, i.e., an arithmetic function $g\in\mathcal{A}$ such that $g^m=f,$ which can be computed inductively via the following:
\begin{equation}\label{eq27}
\begin{split}
    g(0)=&f(0)^{\frac{1}{m}},~g(1)=\frac{f(1)}{mg(0)^{m-1}},\\
   g(k)=&\frac{1}{m g(0)^{m-1}}\left\{f(k)-\sum_{\sum_{i=1}^{m}k_i=k,~k_i<k}
   {k\choose k_1,\ldots,k_m}g(k_1)\cdots g(k_m)
\right\},~k\geq 2.
\end{split}
\end{equation}
\begin{example}
    If we take $f=\epsilon_x$ and $m=2$ then $g=\epsilon_{-x}^{1/2}$
can be calculated using the preceding formula \eqref{eq27} as follows
$$g(0)=1;~g(1)=-\frac{x}{2};~g(2)=\frac{1}{2}(f(2)-2g(1)^2)=\frac{x^2}{4},$$
$$~g(3)=\frac{1}{2}(-x^3-6g(1)g(2))=-\frac{x^3}{8};~g(4)=\frac{x^4}{4}$$
and  inductively leads to $\epsilon_{-x}^{1/2}=\epsilon_{-x/2}$.
More generally, for any rational $r\in\mathbb{Q},$  one has $\epsilon_{x}^{r}=\epsilon_{rx}$.
\end{example}
Since each $f\in V$ is of the form $f=\epsilon_{f(1)},$ it follows that $V$ is a subspace of $U$. It is also clear that, for $m\in\mathbb{N}$ and $f\in W,$ the $m$th root of $f$ is in  $W$. Therefore, $f^{r}\in W$ for all $r\in\mathbb{Q}$. Thus $W$ is also a subspace of $U$. Note that a nontrivial subgroup of a torsion-free group is torsion-free; therefore, each of the nontrivial subgroups of $\mathcal{A}$ can be regarded as a vector space over $\mathbb{Q}$.

Also, it is not hard to see that, for a Hamel-basis $\mathcal{H}$ of $\mathbb{C}(\mathbb{Q}),$  the collection of arithmetic functions $\mathcal{B}_{V}:=\{\epsilon_{x}~|~ x\in \mathcal{H}\}$ is a Hamel-basis of the vector space $V(\mathbb{Q})$.  Therefore, $V(\mathbb{Q})\cong \mathbb{C}(\mathbb{Q})\cong U/W$. On these lines, it will be interesting to obtain an appropriate Hamel-basis for the rational vector space $\mathcal{A}$.

Table \ref{t1} gives the $m$th roots of the Bernoulli numbers as evaluated using \eqref{eq27}, for some values of $m$ and $k$.
 The numerator and the denominator of $\ber(k)^{\frac{1}{2}},~k=0,1,\ldots,$ correspond to the sequences A241885 and A242225, respectively, in the \emph{On-Line Encyclopedia of Integer Sequences} \cite{sloan}.
\begin{table}[ht!]
\centering\caption{Some roots of the Bernoulli numbers in $(\mathcal{A},\bullet)$}\label{t1}
\vspace{10pt}
\begin{tabular}{c|ccccccccc}
  \hline
  $k$ & 0 & 1 & 2 & 3 & 4 &5&6&7&8\\
  \hline\\
  $\ber^{\frac{1}{2}}(k)$ & 1 & $-\frac{1}{4}$ & $\frac{1}{48}$ & $\frac{1}{64}$ & $-\frac{3}{1280}$&$-\frac{19}{3072}$&$\frac{79}{86016}$&$\frac{275}{49152}$
  &$-\frac{2339}{2949120}$\\~\\

  $\ber^{\frac{1}{3}}(k)$ & 1 & $-\frac{1}{6}$ & $\frac{1}{54}$ & $\frac{7}{324}$ & $\frac{2}{3645}$&$-\frac{197}{13122}$&$-\frac{683}{61236}$&$\frac{1009}
  {59049}$
  &$\frac{261203}{5314410}$\\~\\

  $\ber^{\frac{1}{4}}(k)$ & 1 & $-\frac{1}{8}$ & $\frac{7}{384}$ & $\frac{39}{2048}$ & $-\frac{2311}{491520}$&$-\frac{9471}{524288}$&$\frac{254713}{176160768}$&
  $\frac{16744565}
  {402653184}$
  &$\frac{1127877731}{96636764160}$\\~\\
  $\ber^{\frac{1}{5}}(k)$ & 1 & $-\frac{1}{10}$ & $\frac{13}{750}$ & $\frac{97}{6250}$ & $-\frac{237}{31250}$&$-\frac{69061}{4687500}$&$\frac{9768883}{820312500}$&
  $\frac{99676471}
  {2929687500}$
  &$-\frac{827331922}{18310546875}$\\~\\
 % &&&&&&&&&\\
  \hline
\end{tabular}
\end{table}

The sequence $\ber^{\frac{p}{q}}(k),~q\neq 0,~p,q\in\mathbb{N}$ is determined by the generating function
\begin{equation}\label{eq28}
    \Bigl(\frac{t}{e^t-1}\Bigr)^{\frac{p}{q}}:=\sum_{k=0}^{\infty}\ber^{\frac{p}{q}}(k)\frac{t^k}{k!},
\end{equation}
which shows that $\ber^{\frac{p}{q}}(k)$ is the $k$th N\"orlund polynomial (see Liu and Srivastava \cite{srivastava2006}) in $\frac{p}{q}$. For example, $$\ber^{\frac{p}{q}}(0)=1;~\ber^{\frac{p}{q}}(1)=-\frac{p}{2q};~\ber^{\frac{p}{q}}(2)=\frac{p(3p-q)}{12 q^2},$$ $$\ber^{\frac{p}{q}}(3)=-\frac{p^2(p-q)}{8 q^3},~\ber^{\frac{p}{q}}(4)=\frac{p(15 p^3-30p^2q+5pq^2+2q^3)}{240q^4},$$ etc.
\section{Mixed products}\label{sec:4}
Singh \cite{singh2013} introduced the M\"obius-Bernoulli polynomials via the following:
$$M_k(x,n)=\sum_{d|n}\mu(d) d^{k-1} \ber_{\frac{x}{d}}(k),~k\in\A,$$
where the $k$th M\"obius-Bernoulli polynomial is denoted by $M_k(x,n)$ and $n\in\mathbb{N}$.
For $x\in\mathbb{C}$ and $n\in\mathbb{N},$ let $\mathcal{M}_{x,n}\in\mathcal{A}$ such that $\mathcal{M}_{x,n}(k):=M_k(x,n)$. Then
 $$\mathcal{M}_{0,n}(k)=\ber(k)\sum_{d|n}\mu(d)d^{k-1}=\ber(k)(\mu*N_{k-1})(n)$$ where and $N_k(n):=\epsilon_n(k)$ for all $k\in\A$ and $N_{-1}(n):=\frac{1}{n}$.
 The $k$th M\"obius-Bernoulli number is defined as $M_k(n)=M_{0,n}(k)$.
 Now the following remarkable identity holds
\begin{equation}\label{eq29}
    \sum_{i=1;~\gcd(i,n)=1}^{n}i^k=\xi_1(k)(\mathcal{M}_{0,n}\bullet (\epsilon_{n+1}-\epsilon_1))(k)=(\mu N_{k-1}*S_{k,x})(n)
\end{equation}
where $S_{k,x}(n):=\mathcal{S}_{x/n}(k)$.
Note that the identity \eqref{eq29} involves both the Dirichlet convolution and the Cauchy-type product.
It may be useful to explore
the identities expressing
an interaction between the Dirichlet product and the Cauchy-type product.
\section{Acknowledgements}
The author is indebted to Professor Jeffery O. Shallit (Chief-Editor of this Journal), the unknown referees, and my colleagues Parminder Singh and Harpreet Kaur, for their helpful suggestions in the linguistic-improvement of the paper.

\bibliographystyle{plain}

\bigskip
\hrule
\bigskip

\noindent 2010 {\it Mathematics Subject Classification}:
Primary 11A25; Secondary 11B68,  05A10, 11B65.

\noindent \emph{Keywords: }
Cauchy product, Cauchy-type product, Dirichlet convolution, arithmetic function, Bernoulli number, torsion-free group, Bernoulli polynomial, power sum.

\bigskip
\hrule
\bigskip

\noindent (Concerned with sequences
{A027641} {A027642} {A116419} {A116420} {A241885} and A242225.)

\end{document}